\providecommand{\U}[1]{\protect\rule{.1in}{.1in}}
\newtheorem{theorem}{Theorem}
\newtheorem{comment}[theorem]{Remark}
\newtheorem{definition}[theorem]{Definition}
\newtheorem{lemma}[theorem]{Lemma}
\newcommand{\dN}{{{\bf N}}}
\newcommand{\cl}[1]{\overline{#1}}
\newcommand{\graph}{{\rm{graph}}}
\newcommand{\calF}{\mathcal{F}}
\newcommand{\calH}{\mathcal{H}}
\newenvironment{proof}[1][Proof]{\textbf{#1.} }{\ \rule{0.5em}{0.5em}}
\newcounter{figurecounter}
\begin{document}

\title{Browder's Theorem: from One-Dimensional Parameter Space to General Parameter Space %
\thanks{We thank John Levy for useful comments.
The first author acknowledges the support of the Israel Science Foundation, Grant \#217/17.}}

\author{Eilon Solan%
\thanks{The School of Mathematical Sciences, Tel Aviv
University, Tel Aviv 6997800, Israel. e-mail: eilons@post.tau.ac.il.}
 and Omri N.~Solan%
 \thanks{Einstein Institute of Mathematics, the Hebrew University of Jerusalem, Jerusalem 9190401, Israel. e-mail: omrisola@post.tau.ac.il.}}

\maketitle

\begin{abstract}
A parametric version of Brouwer's Fixed Point Theorem,
which is proven using the fixed-point index,
states
that for every continuous mapping $f : (X \times Y) \to Y$,
where $X$ is nonempty, compact, and connected subset of a Hausdorff topological space
and $Y$ is a nonempty, convex, and compact subset of
a locally-convex topological vector space,
the set of fixed points of $f$, defined by $C_f := \{ (x,y) \in X \times Y \colon f(x,y)=y\}$,
has a connected component whose projection onto the first coordinate is $X$.
In this note we provide an elementary proof for this result, using its reduction to the case $X = [0,1]$.
\end{abstract}

\noindent
Keywords: Browder's Theorem, fixed points, connected component.

\bigskip

\noindent
MSC2010: 54H25.

\section{Introduction}

Brouwer's Fixed Point Theorem (Hadamard (1910) and Brouwer (1911)) states that every continuous mapping of a finite-dimensional simplex into itself
has a fixed point.
This result was later generalized to nonempty, convex, and compact subsets of topological vector spaces, see, e.g.,
Schauder (1930), Tychonoff (1935), and Dyer (1956).

One of the results of Browder (1960) implies the following parametric version of Brouwer's fixed point theorem:
\begin{theorem}[Browder, 1960]
\label{theorem:simple}
Let $Y$ be a nonempty, compact, and convex subset of a locally-convex topological vector space,
and let $f : (X \times Y) \to Y$ be a continuous mapping, where $X = [0,1]$.
Denote by
\[ C_f := \{ (x,y) \in X \times Y \colon f(x,y)=y\} \]
the set of fixed points of $f$.
Then $C_f$ has a connected component whose projection onto the first coordinate is $X$.
\end{theorem}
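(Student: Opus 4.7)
The plan is to argue by contradiction, combining a classical topological separation lemma with Tychonoff's fixed-point theorem applied to a suitably twisted self-map of $[0,1]\times Y$. I first observe that $C_f$ is closed in the compact space $[0,1]\times Y$ (as the preimage of the diagonal under a continuous map) and hence compact; moreover, Tychonoff's theorem applied to each slice $f(x,\cdot):Y\to Y$ shows that $A:=C_f\cap(\{0\}\times Y)$ and $B:=C_f\cap(\{1\}\times Y)$ are nonempty disjoint closed subsets of $C_f$. Since the projection of any connected set is an interval, the assumption that no connected component of $C_f$ projects onto $[0,1]$ is equivalent to the assertion that no component of $C_f$ meets both $A$ and $B$.

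I would then invoke the classical separation lemma (proved via the coincidence of connected components with quasi-components in a compact Hausdorff space): if $A,B$ are disjoint closed subsets of a compact Hausdorff space $Z$ and no component of $Z$ meets both, then $Z=Z_A\sqcup Z_B$ for some clopen partition with $A\subseteq Z_A$ and $B\subseteq Z_B$. Applied to $C_f$, this yields $C_f=C_A\sqcup C_B$ with $A\subseteq C_A$ and $B\subseteq C_B$, and with $C_A,C_B$ both closed in the normal space $[0,1]\times Y$. By Urysohn's lemma, choose a continuous $\sigma:[0,1]\times Y\to[0,1]$ with $\sigma\equiv 0$ on $C_A$ and $\sigma\equiv 1$ on $C_B$.

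The decisive construction is the auxiliary map
\[
h(x,y) \;=\; \bigl(1-\sigma(x,y),\; f(x,y)\bigr),
\]
a continuous self-map of the convex compact set $[0,1]\times Y$ in the locally convex product space. Tychonoff's theorem produces a fixed point $(x^*,y^*)$; the second-coordinate equation $y^*=f(x^*,y^*)$ forces $(x^*,y^*)\in C_f=C_A\sqcup C_B$, while the first gives $x^*=1-\sigma(x^*,y^*)$. If $(x^*,y^*)\in C_A$ then $\sigma(x^*,y^*)=0$, so $x^*=1$ and $(1,y^*)\in C_f\cap(\{1\}\times Y)=B\subseteq C_B$, contradicting the disjointness of $C_A$ and $C_B$; the case $(x^*,y^*)\in C_B$ is symmetric and forces $x^*=0$ with $(0,y^*)\in A\cap C_B=\emptyset$. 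The main obstacle is the separation lemma from the second paragraph, without which the passage from connected components to clopen partitions is unavailable; once that is in hand, the only real cleverness needed is the twist $1-\sigma$ rather than $\sigma$ in the definition of $h$, which is precisely what sends each clopen piece to the opposite endpoint of $[0,1]$ and thereby produces a contradiction on both sides.
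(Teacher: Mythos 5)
Your argument is correct, but note that the paper does not actually prove Theorem~\ref{theorem:simple}: it imports it as the known one-dimensional Browder theorem (crediting Browder (1960) for a proof via the fixed-point index and Solan and Solan (2022b) for an elementary proof), and the paper's own work consists of reducing the general-parameter-space Theorem~\ref{theorem:general} to it. So there is no in-paper proof to match yours against; what you have supplied is a self-contained proof of the black-boxed ingredient. Checking it on its own merits: the reduction of ``no component projects onto $[0,1]$'' to ``no component meets both $A$ and $B$'' is right, since a component meeting both slices projects onto a connected subset of $[0,1]$ containing $0$ and $1$; the cut-wire separation lemma you invoke (disjoint closed sets in a compact Hausdorff space not joined by any component admit a clopen separation, via quasi-components equalling components plus a compactness argument) is the genuine crux and is a standard fact, and you are right to flag it; $[0,1]\times Y$ is a nonempty compact convex subset of the locally convex space $\dR\times V$, so Tychonoff's fixed-point theorem applies to $h(x,y)=(1-\sigma(x,y),f(x,y))$; and the two endpoint cases each land a point of $C_A$ in $C_B$ or vice versa, giving the contradiction. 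This is essentially the same toolkit the paper attributes to its reference for the elementary proof (a separation theorem, an extension/Urysohn-type lemma, and a fixed-point theorem in place of the fixed-point index), so your proposal is a faithful and complete realization of the input the paper relies on, rather than an alternative to anything the paper itself argues.
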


This result has been used in a variety of topics,
like nonlinear complementarity theory (see, e.g., Eaves, 1971, or Allgower and Georg, 2012),
nonlinear boundary value problems
(Shaw, 1977, Amster, 2021),
the study of global continua of solutions of nonlinear
partial differential equations (see, e.g., Costa and Gon\c{c}alves, 1981, or Massabo and Pejsachowitz, 1984),
theoretical economics (Citanna et al., 2001),
and game theory (see, e.g., Herings and Peeters, 2010, Solan and Solan, 2021,
or Munk and Solan (2021)).

Browder's (1960) proof of Theorem~\ref{theorem:simple} uses the concept of the fixed-point index, and, in fact,
the more general version of Theorem~\ref{theorem:simple} that Browder (1960) proved is stated using the fixed-point index.
Solan and Solan (2022a) extended Theorem~\ref{theorem:simple} to the case that $X$ is any nonemtpy, connected, and compact subset of a Hausdorff topological space,
still using the fixed-point index.
Solan and Solan (2022b) provided an elementary proof of Theorem~\ref{theorem:simple} that does not use the fixed-point index,
but rather a standard separation theorem, Tietze's extension theorem, and Brouwer's fixed point theorem.

In this note we provide an elementary proof to the extension of Solan and Solan (2022a),
and show that also for general parameter spaces,
one does not need to use the fixed-point index to derive Theorem~\ref{theorem:simple}.
Specifically, to prove the extension to general parameter spaces we use the version of the theorem
for one-dimensional parameter spaces coupled with elementary topological arguments.

The paper has several contributions.
First, since our proof uses only elementary topological arguments,
it can be followed by anyone with basic knowledge in topology.
Second, since our proof does not use the fixed-point index, it may be possible to extend it to more general spaces,
where the fixed-point index does not exist or is not known to exist.
And third, we show that the extension of Browder's Theorem for general parameter spaces follows from the version of the theorem
for one-dimensional parameter space.

An interesting problem that remains open
and may be useful for applications is whether Browder's theorem extends to the case where the parameter space is not necessarily compact.
We hope that our proof will pave the road to establishing this extension.

\section{The Statement of the General Browder's Theorem}

The extension of Theorem~\ref{theorem:simple} to general parameter spaces is as follows.

\begin{theorem}[Solan and Solan, 2022a]
\label{theorem:general}
Let $X$ be a nonemtpy, connected, and compact subset of a Hausdorff topological space,
let $Y$ be a nonempty, compact, and convex subset of a locally-convex topological vector space,
and let $f : X \times Y \to Y$ be a continuous mapping.
Denote by
\[ C_f := \{ (x,y) \in X \times Y \colon f(x,y)=y\} \]
the set of fixed points of $f$.
Then $C_f$ has a connected component whose projection onto the first coordinate is $X$.
\end{theorem}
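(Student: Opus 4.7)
The plan is to argue by contradiction: suppose no connected component of $C_f$ projects onto $X$. Since the Schauder--Tychonoff fixed-point theorem guarantees that each slice $f(x,\cdot) : Y \to Y$ has a fixed point, the projection $\pi|_{C_f} : C_f \to X$ is surjective, and $C_f$ is a nonempty compact Hausdorff space.

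The key bridge to Theorem~\ref{theorem:simple} is the following observation: for every continuous $\gamma : [0,1] \to X$, the pullback $\tilde f(t,y) := f(\gamma(t),y)$ on $[0,1] \times Y$ satisfies the hypotheses of Theorem~\ref{theorem:simple}, and hence $C_{\tilde f}$ possesses a connected component $K'$ projecting onto $[0,1]$. Its image $\{(\gamma(t),y) : (t,y) \in K'\} \subseteq C_f$ is connected and projects onto $\gamma([0,1])$, so some component of $C_f$ has $\pi$-projection containing $\gamma([0,1])$. Consequently, when $X$ is a Peano continuum, the Hahn--Mazurkiewicz theorem supplies a continuous surjection $\gamma : [0,1] \to X$, and we immediately obtain a component of $C_f$ projecting onto $X$, contradicting the standing hypothesis.

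For general compact connected Hausdorff $X$, no such surjection need exist (witness the topologist's sine curve), and this is the essential topological difficulty. To handle it, I would fix $x_0 \in X$, let $A_{x_0} := C_f \cap \pi^{-1}(x_0)$, and form its component-saturation $\hat A_{x_0}$ (the union of components of $C_f$ meeting $A_{x_0}$), which equals the intersection of all clopen subsets of $C_f$ containing $A_{x_0}$, since quasi-components coincide with components in compact Hausdorff spaces. A careful case analysis, using the hypothesis that no component projects onto $X$, should yield a clopen decomposition $C_f = U \sqcup V$ with $\pi(U)$ and $\pi(V)$ both proper closed subsets of $X$ covering $X$; connectedness of $X$ then forces $\pi(U) \cap \pi(V) \neq \emptyset$. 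The goal is to turn this structure into a contradiction by applying Theorem~\ref{theorem:simple} along a well-chosen $[0,1]$-parameterized curve.

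The main obstacle is the absence of a continuous $\gamma : [0,1] \to X$ joining a point of $\pi(U) \setminus \pi(V)$ to a point of $\pi(V) \setminus \pi(U)$; heuristically, such a $\gamma$, combined with Theorem~\ref{theorem:simple}, would produce a connected subset of $C_f$ meeting both $U$ and $V$, contradicting their disjoint clopen-ness. Closing this gap is the paper's central contribution, and I expect it to proceed either by expressing $X$ as an inverse limit of Peano continua (on which the Hahn--Mazurkiewicz argument already suffices) and transferring the conclusion back along the inverse system, or by an iterative refinement of the clopen partition of $C_f$ that exploits compactness of $C_f$ and connectedness of $X$ in tandem with Theorem~\ref{theorem:simple} applied to shorter and shorter arcs. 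Either route is delicate, and the care needed to execute it is the heart of the ``elementary topological arguments'' promised in the paper.
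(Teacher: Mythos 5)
Your first paragraph and the Peano-continuum case are correct and coincide with the paper's own warm-up: when a continuous surjection $\gamma : [0,1] \to X$ exists, pulling $f$ back along $\gamma$, applying Theorem~\ref{theorem:simple}, and pushing the resulting component forward does settle that case. You have also correctly identified where the real difficulty lies. But the general case is not proven: both routes you propose are left as expectations rather than arguments, and each has an unresolved core. For the clopen-decomposition route you concede yourself that the needed curve joining a point of $\pi(U)\setminus\pi(V)$ to a point of $\pi(V)\setminus\pi(U)$ need not exist, and no substitute is supplied. For the inverse-limit route, writing $X$ as an inverse limit of polyhedra is standard, but ``transferring the conclusion back along the inverse system'' is precisely the hard step: you would need to show that a net of connected components (one for each approximating space), each projecting onto its polyhedron, accumulates to a connected subset of $C_f$ projecting onto $X$, and that convergence argument is exactly what is missing. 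So the proposal identifies the problem but does not close it.

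The paper's actual mechanism, which you may find instructive to compare, is a discrete surrogate for the space-filling curve: take a finite open cover $G$ of $X$, pick a representative point in each element, use connectedness of $X$ to traverse all representatives by a finite walk $x^G_0,\dots,x^G_{N(G)}$ in the nerve graph, and define $g^G : [0,1]\times Y \to Y$ by setting $g^G(i/N(G),\cdot) = f(x^G_i,\cdot)$ and interpolating linearly in between (this uses convexity of $Y$). Theorem~\ref{theorem:simple} applied to $g^G$ yields a connected component $\widehat C^G$ over $[0,1]$, which is then transported back into $X \times Y$ as a finite union of closed rectangles $\cl{O_X}\times\cl{O_Y}$ indexed by the covers. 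Taking a convergent subnet of these closed sets in the Vietoris topology (compactness of the hyperspace) produces a set $D^*$ that is shown to be connected, to project onto $X$, and --- the delicate step, using convexity of the target neighborhood to control the linear interpolation --- to consist of fixed points of $f$. This limit-of-approximations argument is the ingredient your proposal lacks.
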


In the next section we will provide an elementary proof of Theorem~\ref{theorem:general}.
We here sketch the main ideas behind the proof.
Suppose first that $X$ possesses a space-filling curve,
that is, there exists a continuous and surjective mapping $\varphi : [0,1] \to X$.
The mapping $g := f \circ (\varphi,\mathrm{Id}_Y) : [0,1] \times Y \to Y$ is a composition of two continuous mappings, hence continuous, and by Theorem~\ref{theorem:simple} the set $C_g$ has a connected component, denoted $\widehat C$, whose projection onto the first coordinate is $X$.
A continuous image of a connected set is connected,
hence
the set $D := \{(\varphi(t),x) \colon (t,x) \in \widehat C\}$ is a connected
subset
of $C_f$
whose projection onto $X$ is $X$.
The projection of the connected component of $C_f$ that contains $D$ onto the first coordinate is $X$.

Suppose now that $X$ does not possess a space-filling curve.
We will define a family of mappings that will approximate in some sense the mapping $g$ defined above.
Let $G$ be a finite open cover of $X$.
For every $O \in G$ choose a point $x_O \in O$.
By possibly replicating some of the points in $(x_O)_{O \in G}$,
we can construct a sequence $(x_0,x_1,\dots,x_{N(G)})$ such that
(a) the set $\{x_0,x_1,\dots,x_{N(G)}\}$ coincides with $(x_O)_{O \in G}$,
(b) for each $i$, the points $x_i$ and $x_{i+1}$ lie respectively in elements $O_i$ and $O_{i+1}$ of $G$ that have a nonempty intersection.
Since $X$ is connected, such a sequence exists.
The sequence $(x_1,x_2,\dots,x_{N(G)})$ should be thought of as an approximation of a space-filling curve.
We next define a continuous mapping $g^G : [0,1] \times Y \to Y$ as follows:
the section $g^G(\frac{i}{N(G)},\cdot)$ coincides with $f(x_i,\cdot)$,
and in each interval $(\frac{i}{N(G)},\frac{i+1}{N(G)})$ the mapping $g^G$ is extended linearly
(recall that $Y$ is a vector space).

The mapping $g^G$ satisfies the conditions of Theorem~\ref{theorem:simple},
hence its set of fixed points has a connected component $\widehat C^G$ whose projection onto the first coordinate is $[0,1]$.
We next define a proper subset of $X \times Y$ that in some sense approximates the set of fixed points of $f$.
Given a finite open cover $F$ of $Y$, define a subset $D^{G,F}$ of $X \times Y$ as a finite union
of sets $\cl{O_X} \times \cl{O_Y}$,
where $O_X \in G$, $O_Y \in F$, and $\cl{O_X}$ and $\cl{O_Y}$ are the closures of these sets;
the set $\cl{O_X} \times \cl{O_Y}$ is taken in this union if and only if
there are $i \in \{1,2,\dots,N(G)\}$, $t \in [0,1]$, and $y \in O_Y$
such that $\{x_i,x_{i+1}\} \cap O_X \neq\emptyset$ and $(\frac{i+t}{N(G)},y) \in \widehat C^G$.
The set $D^{G,F}$ is a union of a finite number of closed sets, and hence closed.

The set of closed subsets of a topological space is compact in the Vietoris topology (also called the finite topology and the exponential topology).
The above construction of the set $D^{G,F}$ can be done for all open covers $G$ of $X$ and $F$ of $Y$.
We will show that any accumulation point of the sequence $(D^{G,F})$ as the covers $G$ and $F$ become finer
(and the cover $F$ of $Y$ consists only of convex sets)
is a connected subset of $C_f$ whose projection onto the first coordinate is $X$.
This will complete the proof of Theorem~\ref{theorem:general}

\section{Proof of Theorem~\ref{theorem:general}}

This section is devoted to the elementary proof of Theorem~\ref{theorem:general}
using its reduction to the one-dimensional case.

\subsection{Finite open covers}

A \emph{finite open cover} of a topological space $Z$
is a finite collection of nonempty open sets whose union is $Z$.
Let $\calF_Z$ be the set of all finite open covers of $Z$.
When $Z$ is a locally-convex space,
a \emph{finite open convex cover} of $Z$
is a finite collection of open convex sets whose union is $Z$.
In this case, denote by $\calF_Z^C$ the set of all finite open convex covers of $Z$.

%The reader should not confuse the two uses of the term ``cover'';
%the verb ``cover'' is used when the projection of a set in $Y \times X$ on $Y$ is $Y$,
%and the noun ``cover'' means a collection of subsets of $Z$ whose union is $Z$.

Define a partial order $\geq$ on $\calF_Z$ as follows:
For every $F_1,F_2 \in \calF_Z$, we say that $F_1 \geq F_2$ if and only if each open set in $F_1$ is a subset of an open set in $F_2$.
For every $F_1,F_2 \in \calF_Z$ there is $F_3 \in \calF_Z$ such that $F_3 \geq F_1$ and $F_3 \geq F_2$,
for example,
\[ F_3 = \{ O_1 \cap O_2 \colon O_1 \in F_1, O_2 \in F_2, O_1 \cap O_2 \neq \emptyset\}. \]
It follows that the pair $(\calF_Z,\geq)$ is a directed set.
For a similar reason, $(\calF_Z^C,\geq)$ is a directed set.

Note that if $\calF_Z$ is a directed set and $F \in \calF_Z$,
then the set $\{F' \in \calF_Z \colon F' \geq F\}$ is a directed set.

\subsection{A one-dimensional version of $f$}

Let $G \in \calF_X$ be a finite open cover of $X$.
For every element $O \subseteq G$ let $x_O \in O$ be arbitrary,
and set
\[ X^G := \{ x_O \colon O \in G\}. \]
Since $G$ is a finite collection of sets, the set $X^G$ is finite.

Define an undirected graph (in the graph-theoretic sense), denoted $\graph(G)$, whose set of vertices is $X^G$
and where there is an edge $x_{O_1} \leftrightarrow x_{O_2}$ if and only if $O_1 \cap O_2 \neq\emptyset$.
Since $X$ is connected, $\graph(G)$ is a connected graph.

Let $x^G_0,x^G_1,\dots,x^G_{N(G)}$ be a finite sequence of points in $X^G$ (some of the points may appear several times in the sequence)
that satisfies the following properties:
\begin{enumerate}
\item   $X^G = \{x^G_0,x^G_1,\dots,x^G_{N(G)}\}$; that is, the sequence $x^G_0,x^G_1,\dots,x^G_{N(G)}$ contains all points in $X^G$.
\item   Every two adjacent elements along the sequence are neighbors in $\graph(G)$,
that is,
the edge $x^G_i \leftrightarrow x^G_{i+1}$ is in $G$ for every $i$, $0 \leq i \leq N(G) -1$.
\end{enumerate}

Define a mapping $g^G : [0,1] \times Y \to Y$ as follows:
\begin{eqnarray*}
g^G(s,y) := \left\{
\begin{array}{lll}
f(x^G_i,y), & \ \ \ & y \in Y, s = \frac{i}{N(G)} \hbox{ for some } i \in \{0,1,\dots, N(G)\},\\
(1-t)f(x^G_i,y) + tf(x^G_{i+1},y), & & y \in Y, s = \frac{i+t}{N(G)} \hbox{ for some } t \in (0,1).
\end{array}
\right.
\end{eqnarray*}
In words, in points $s = \frac{i}{N(G)}$, the section $g^G(s,\cdot)$ coincides with $f(x^G_i,\cdot)$;
and for every $y \in Y$, the section $g^G(\cdot,y)$ is piecewise linear.

The mapping $g^G$ is continuous.
Denote its set of fixed points by
\[ C^{G} := \bigl\{ (s,y) \in [0,1] \times Y \colon g^G(s,y) = y) \bigr\} \subseteq [0,1] \times Y. \]
By Browder's Theorem for a one-dimensional parameter space (see Theorem~\ref{theorem:simple}),
there is a connected component $\widehat C^G$ of $C^{G}$ whose projection onto the first coordinate is $[0,1]$.
Since $g^G$ is continuous, $\widehat C^G$ is closed.

\subsection{An approximation of $\widehat C^G$}

We here define for every finite open convex cover $F$ of $Y$ a set $D^{G,F} \subseteq X \times Y$ that, in some sense,
approximates $\widehat C^G \subseteq [0,1] \times Y$ by a union of rectangles.
For every open set $O$ in a topological space $Z$,
denote by $\cl{O}$ its closure in $Z$.

Let $F \in \calF_Y^C$ be a finite open convex cover of $Y$.
Define
\[ D^{G,F} :=
\bigcup \left\{ \cl{O_X} \times \cl{O_Y} \colon
\begin{array}{lll}
O_X \in G, O_Y \in F,\\
\exists i \in \{0,1,\dots,N(G)-1\}, \exists y \in O_Y, \exists t \in [0,1]\\
\hbox{ such that } \\
x^G_i \in O_X \hbox{ or } x^G_{i+1} \in O_X,
\hbox{ and }
(\frac{i+t}{N(G)},y) \in \widehat C^G
\end{array}
\right\} \subseteq X \times Y. \]

The set $D^{G,F}$ is the union of finitely many closed set, and hence compact.
Let $K(X \times Y)$ be the collection of all closed subsets of $X \times Y$.
By Theorem 4.2 in Michael (1951), this set is compact when endowed with the Vietoris topology, also called the finite topology.

The product set $\calF_X \times \calF_Y^C$ is directed,
and the mapping $D : \calF_X \times \calF_Y^C \to K(X \times Y)$ that assigns to each pair of finite open covers $G \in \calF_X$
and $F \in \calF_Y^C$ the set $D^{G,F} \in K(X \times Y)$
is a net.

Since $K(X \times Y)$ is compact,
there is a subnet $\calH$ of $\calF_X \times \calF_Y^C$ and a compact set $D^* \in K(X \times Y)$
such that $\lim_{(G,F) \in \calH} D^{G,F} = D^*$.
That is, for every open set $O \subseteq X \times Y$ that contains $D^*$ there is a directed subnet $\calH^* \subseteq \calH$
such that $D^{G,F} \subseteq O$ for every $(G,F) \in \calH^*$.

\subsection{Properties of $D^*$}

We will prove Theorem~\ref{theorem:general} by showing the following three properties of $D^*$:
\begin{enumerate}
\item The projection of $D^*$ onto the first coordinate is $X$ (Lemma~\ref{lemma:cover}).
\item $D^*$ is connected (Lemma~\ref{lemma:connected}).
\item $D^* \subseteq C_f$ (Lemma~\ref{lemma:subset}).
\end{enumerate}
These three properties will imply that the connected component of $C_f$ that contains $D^*$ satisfies the conclusion of Theorem~\ref{theorem:general}.
The third property will prove to be the most challenging among the three.

\begin{lemma}
\label{lemma:cover}
The projection of $D^*$ onto the first coordinate is $X$.
\end{lemma}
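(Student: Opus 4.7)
The plan is to argue by contradiction. The key intermediate step is to verify that every $x \in X$ already appears in the projection of each finite-level approximation $D^{G,F}$; then, if some point of $X$ were missing from the projection of $D^*$, one could separate that point from $D^*$ by an open neighbourhood of $D^*$, and the upper-semicontinuity of the net $(D^{G,F}) \to D^*$ recorded in the previous subsection would produce a contradiction.

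First I verify the finite-level claim: for every $G \in \calF_X$, every $F \in \calF_Y^C$, and every $x \in X$, the point $x$ lies in the projection of $D^{G,F}$ onto the first coordinate. Pick $O_X \in G$ with $x \in O_X$; the chosen representative $x_{O_X} \in X^G$ appears in the enumeration as $x^G_j$ for some $j$. If $j \leq N(G)-1$, set $i := j$ and $t := 0$ and use the clause $x^G_i \in O_X$; if $j = N(G)$, set $i := N(G)-1$ and $t := 1$ and use the clause $x^G_{i+1} \in O_X$. In either case $s := \frac{i+t}{N(G)} \in [0,1]$, and since $\widehat C^G$ projects onto $[0,1]$ by Theorem~\ref{theorem:simple}, there exists $y \in Y$ with $(s,y) \in \widehat C^G$. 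Choosing any $O_Y \in F$ containing $y$, the defining conditions of $D^{G,F}$ are satisfied by the pair $(O_X, O_Y)$, so $\cl{O_X} \times \cl{O_Y} \subseteq D^{G,F}$. Since $x \in O_X \subseteq \cl{O_X}$ and $y \in O_Y \subseteq \cl{O_Y}$, this yields $(x,y) \in D^{G,F}$.

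Now suppose for contradiction that some $x_0 \in X$ is not in the projection of $D^*$, equivalently $(\{x_0\} \times Y) \cap D^* = \emptyset$. Both sets are closed in $X \times Y$, which is compact (as a product of compact sets) and Hausdorff (as a product of Hausdorff spaces) and therefore normal. Hence there exists an open $U \subseteq X \times Y$ with $D^* \subseteq U$ and $U \cap (\{x_0\} \times Y) = \emptyset$. By the stated limit property, there is a directed subnet $\calH^* \subseteq \calH$ such that $D^{G,F} \subseteq U$ for every $(G,F) \in \calH^*$; in particular $D^{G,F} \cap (\{x_0\} \times Y) = \emptyset$, so $x_0$ is not in the projection of $D^{G,F}$, contradicting the first step.

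I expect no serious obstacle here: the construction of $D^{G,F}$ already guarantees an element over every $x \in X$, so the main care is simply in handling the boundary index $j = N(G)$ in the first step and in invoking normality of $X \times Y$ for the separation argument in the second. The substantive work of the proof of Theorem~\ref{theorem:general} is postponed to Lemmas~\ref{lemma:connected} and~\ref{lemma:subset}.
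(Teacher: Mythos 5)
Your proof is correct and follows essentially the same route as the paper's: argue by contradiction, separate $\{x_0\}\times Y$ from $D^*$ by an open set, and use the Vietoris convergence of the net $(D^{G,F})$ to transfer the separation to some finite level. The only difference is one of explicitness: you spell out the finite-level fact that every $D^{G,F}$ projects onto all of $X$ (via the enumeration $x^G_0,\dots,x^G_{N(G)}$, the boundary case $j=N(G)$, and the surjectivity of the projection of $\widehat C^G$ onto $[0,1]$), which the paper compresses into its terse final sentence.
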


\begin{proof}
Assume by way of contradiction that there is $x^* \in X$ such that $(\{x^*\} \times Y) \cap D^* = \emptyset$.
Since $X$ is Hausdorff and compact,
there are two disjoint open sets $O_X,O'_X \subseteq X$ such that $x^* \in O_X$ and $D^* \subseteq O'_X \times Y$.
Since $\lim_{(G,F) \in \calH} D^{G,F} = D^*$, it follows that for every $(G,F) \in \calF_X \times \calF_Y^C$
there is $(G',F') \in \calH$ such that $D^{G',F'} \subseteq O'_X \times Y$.
This is a contradition, since for every $G' \in \calF_X$ that contains an element $O_{G'} \subseteq O_X$ there is a point $(x,y) \in D^{G',F'}$ with $x \in O_X$.
\end{proof}

\begin{lemma}
\label{lemma:connected}
$D^*$ is connected.
\end{lemma}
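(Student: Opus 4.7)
The plan is to argue by contradiction, using the connectedness of $\widehat{C}^G$ supplied by Theorem~\ref{theorem:simple}.

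Assume that $D^*=A\sqcup B$ for nonempty closed disjoint $A,B$. By normality of the compact Hausdorff space $X\times Y$ I choose disjoint open $U\supseteq A$ and $V\supseteq B$ with $\overline{U}\cap\overline{V}=\emptyset$, and I fix an entourage $E$ in the unique compatible uniformity on $X\times Y$ such that no $E$-small subset of $X\times Y$ meets both $\overline{U}$ and $\overline{V}$. Exploiting the cofinality of fine covers in $\calF_X\times\calF_Y^C$ together with the Vietoris convergence $D^{G,F}\to D^*$, I pass to a subnet $\calH'\subseteq\calH$ on which, simultaneously: (i)~$D^{G,F}\subseteq U\cup V$; (ii)~$D^{G,F}$ meets both $U$ and $V$; and (iii)~every rectangle $\overline{O_X}\times\overline{O_Y}$ appearing in $D^{G,F}$ is $E$-small. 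Conditions (i) and (iii) together imply that each such rectangle lies entirely in $U$ or entirely in $V$, giving a well-defined label $\tau(R)\in\{U,V\}$.

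I then define $\sigma\colon\widehat{C}^G\to\{U,V\}$ by $\sigma(s,y):=\tau(R)$ for any rectangle $R$ witnessed by $(s,y)\in\widehat{C}^G$. To see $\sigma$ is well-defined I use a \emph{bridge} argument. For $(s,y)$ with $s=(i+t)/N(G)$, the edge $x_i^G\leftrightarrow x_{i+1}^G$ of $\graph(G)$ gives $O,O'\in G$ with $x_i^G\in O$, $x_{i+1}^G\in O'$, and a common point $z\in O\cap O'$. Fixing $O_Y\in F$ with $y\in O_Y$, the two bridge rectangles $\overline{O}\times\overline{O_Y}$ and $\overline{O'}\times\overline{O_Y}$ share $(z,y)$, so $\tau$ assigns them the same side; any other witnessed rectangle $\overline{O_X}\times\overline{O_Y'}$ intersects one of these bridges at $(x_i^G,y)$ or $(x_{i+1}^G,y)$ and therefore carries the same label. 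At boundary points $s=j/N(G)$ a double bridge through $x_{j-1}^G,x_j^G,x_{j+1}^G$ reconciles the two adjacent sub-intervals.

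The function $\sigma$ is locally constant: if $\sigma(s,y)=U$ then $(x_i^G,y)\in U$, and for $(s',y')\in\widehat{C}^G$ near $(s,y)$ the point $(x_i^G,y')$ remains in the open set $U$, so any witnessed rectangle through $(x_i^G,y')$ forces $\sigma(s',y')=U$. Since $\widehat{C}^G$ is connected and $\{U,V\}$ is discrete, $\sigma$ is constant, whence every rectangle in $D^{G,F}$ lies on one side and $D^{G,F}$ is contained in $U$ or in $V$, contradicting~(ii). The main obstacle is step~(iii): rectangles $\overline{O_X}\times\overline{O_Y}$ need not be connected because $\overline{O_X}$ can fail to be connected in an arbitrary compact Hausdorff space, so the naive reasoning ``a connected set inside a disjoint union of two open sets lies in one of them'' is unavailable. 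The resolution is to use the uniform structure of $X\times Y$ to make the rectangles small enough that, even when disconnected, they cannot straddle the separated pair $\overline{U},\overline{V}$; checking cofinally within the given subnet $\calH$ that such fineness is achievable is the technical heart of the proof.
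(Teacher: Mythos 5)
Your proof is essentially correct, but it takes a genuinely different technical route from the paper's, so it is worth comparing the two. The paper also separates $D^*$ by disjoint open sets $O_1,O_2$, passes to a tail of the net on which $D^{G,F}\subseteq O_1\cup O_2$ and meets both, and then exploits the connectedness of $\widehat C^G$ --- but it does so by introducing the intersection graph $\graph(D^{G,F})$ whose vertices are the rectangles $\cl{O_X}\times\cl{O_Y}$ composing $D^{G,F}$, with edges given by nonempty intersection. Connectedness of $\widehat C^G$ makes this graph connected, so \emph{some single rectangle must meet both} $O_1$ and $O_2$; the paper then picks a point of that rectangle in each of $O_1$ and $O_2$, passes to a convergent subnet, observes that the two limits coincide (because the common rectangle shrinks as the covers refine), and gets a contradiction since the common limit lies in $D^*\subseteq O_1\cup O_2$. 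You instead rule out straddling rectangles from the outset by choosing an entourage $E$ separating $\cl{U}$ from $\cl{V}$ and refining until every rectangle is $E$-small, and then transport the resulting two-coloring back to $\widehat C^G$ as a locally constant function $\sigma$, invoking the connectedness of $\widehat C^G$ topologically rather than combinatorially. Your bridge/well-definedness bookkeeping for $\sigma$ is the exact counterpart of the paper's observation that $\graph(D^{G,F})$ is connected. What each approach buys: the paper's argument never mentions uniformities and needs no quantitative smallness until the very last limit step (where the claim $x_1^*=x_2^*$, $y_1^*=y_2^*$ implicitly uses the same ``rectangles shrink along the net'' fact that you make explicit via $E$); your argument front-loads that smallness into one cofinality check --- which, as you correctly note, is the point requiring care, and which does go through since the covers by $E$-small sets form an upward-closed cofinal family in $\calF_X\times\calF_Y^C$ --- and in exchange obtains the contradiction without extracting a further convergent subnet. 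Your diagnosis that $\cl{O_X}$ need not be connected, so one cannot argue rectangle-by-rectangle via connectedness, is exactly the pitfall the paper's graph construction is also designed to avoid.
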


\begin{proof}
Assume by way of contradiction that $D^*$ is not connected.
Then there are two disjoint open sets $O_1$ and $O_2$ in $X \times Y$ such that
(a) $D^* \cap O_1 \neq \emptyset$,
(b) $D^* \cap O_2 \neq \emptyset$,
and
(c) $D^* \subseteq O_1 \cup O_2$.

The set $O_1 \cup O_2$ is open in $X \times Y$ and contains $D^*$.
Hence, there is a directed subset $\calH^*$ of $\calH$ such that $D^{G,F} \subseteq O_1 \cup O_2$ for every $(G,F) \in \calH^*$.
Since $\lim_{(G,F) \in \calH^*} D^{G,F} = D^*$,
it follows that $D^{G,F}$ intersects both $O_1$ and $O_2$, for every $(G,F) \in \calH^*$.

Fix $(G,F) \in \calH^*$.
We will use the following property of the set $D^{G,F}$.
Consider the undirected graph, denoted $\graph(D^{G,F})$, whose vertices are
the sets $(\cl{O_X} \times \cl{O_Y})_{O_X \in G, O_Y \in F}$ that are contained in $D^{G,F}$,
and there is an edge between $\cl{O_X} \times \cl{O_Y}$ and $\cl{O'_X} \times \cl{O'_Y}$
if and only if $\cl{O_X} \times \cl{O_Y}$ and $\cl{O'_X} \times \cl{O'_Y}$ intersect.
Since the set $\widehat C^G$ is connected, $\graph(D^{G,F})$ is connected as well.

Since $\graph(D^{G,F})$ is connected,
it follows that for each $(G,F) \in \calH^*$ there are $O_X \in G$ and $O_Y \in F$ such that
(a) $\cl{O_X} \times \cl{O_Y} \subseteq D^{G,F}$,
(b) $O_1 \cap (\cl{O_X} \times \cl{O_Y}) \neq \emptyset$,
and
(c) $O_2 \cap (\cl{O_X} \times \cl{O_Y}) \neq \emptyset$.
Indeed, all vertices of $\graph(D^{G,F})$ are subsets of $O_1 \cup O_2$,
at least one of them intersects $O_1$,
and at least one of them intersects $O_2$.
If each vertex of $\graph(D^{G,F})$ was a subset of either $O_1$ or $O_2$,
then the union of the vertices in $O_1$ would have been disjoint of the union of the vertices in $O_2$,
and hence the graph would not have been connected.

For each $(G,F) \in \calH^*$,
fix
$(x_1^{G,F},y^{G,F}_1) \in D^{G,F} \cap O_1$
and
$(x_2^{G,F},y^{G,F}_2) \in D^{G,F} \cap O_2$
that belong to the same vertex of $\graph(D^{G,F})$.

Since $X$ and $Y$ are compact,
there is a directed subset $\calH^{**}$ of $\calH^*$ such that the four limits
$x_1^* := \lim_{(G,F) \in \calH^{**}} x_1^{G,F}$,
$x_2^* := \lim_{(G,F) \in \calH^{**}} x_2^{G,F}$,
$y_1^* := \lim_{(G,F) \in \calH^{**}} y_1^{G,F}$,
and
$y_2^* := \lim_{(G,F) \in \calH^{**}} y_2^{G,F}$ exist.
Since $(x_1^{G,F},y^{G,F}_1)$ and $(x_2^{G,F},y^{G,F}_2)$ belong to the same vertex of $\graph(D^{G,F})$, for every $(G,F) \in \calH^{**}$,
we have $x^*_1 = x^*_2$ and $y^*_1 = y^*_2$.
Since $D^* = \lim_{(G,F) \in \calH^{**}} D^{G,F}$,
we have
$(x^*_1,y^*_1) \in D^* \subseteq O_1 \cup O_2$.
Suppose w.l.o.g.~that $(x^*_1,y^*_1) \in O_1$.
Then there exists $(G,F) \in \calH^{**}$ such that
$(x^{G,F}_2,y^{G,F}_2) \in O_1$,
contradicting the fact that
$(x^{G,F}_2,y^{G,F}_2) \in O_2$ and $O_1 \cap O_2 = \emptyset$.
\end{proof}

\bigskip

\begin{lemma}
\label{lemma:subset}
$D^* \subseteq C_f$.
\end{lemma}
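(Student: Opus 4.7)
The plan is to argue by contradiction. Suppose $(x^*, y^*) \in D^*$ with $f(x^*, y^*) \neq y^*$. Using Hausdorffness together with the local convexity of the ambient space, I would separate $y^*$ and $f(x^*, y^*)$ by disjoint open convex sets $V \ni y^*$ and $W \ni f(x^*, y^*)$, and then use continuity of $f$ to shrink to open neighborhoods $U$ of $x^*$ and $V_f \subseteq V$ of $y^*$ with $f(U \times V_f) \subseteq W$. The target is to locate, eventually along the subnet $\calH$, a pair $(x^{G,F}, y^{G,F}) \in D^{G,F}$ near $(x^*, y^*)$ whose membership in some product $\cl{O_X} \times \cl{O_Y}$ is witnessed by $(i, t, y')$ with $x^G_i, x^G_{i+1} \in U$ and $y' \in V_f$. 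Once this is achieved, the identity $(1-t)\, f(x^G_i, y') + t\, f(x^G_{i+1}, y') = y'$ together with convexity of $W$ forces $y' \in W$, contradicting $y' \in V_f \subseteq V$ and $V \cap W = \emptyset$.

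The main obstacle is controlling $x^G_i$ and $x^G_{i+1}$ simultaneously: the condition in the definition of $D^{G,F}$ is asymmetric, requiring only one of them to lie in $O_X$, while the other sits in a different cover element $O^{i+1}$ merely intersecting some $O^i \ni x^G_i$ in the intersection graph used to build the sequence $x^G_0, \dots, x^G_{N(G)}$. Refining $G$ shrinks $O_X$, but not a priori $O^{i+1}$. To overcome this I would use the regularity of the compact Hausdorff space $X$ to build a chain
\[ W_1 \subseteq \cl{W_1} \subseteq W_2 \subseteq \cl{W_2} \subseteq W_3 \subseteq \cl{W_3} \subseteq U \]
of open neighborhoods of $x^*$, and impose that $G$ refine each of the three covers $\calG_k := \{W_{k+1},\ X \setminus \cl{W_k}\}$ for $k \in \{1, 2, 3\}$. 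Each such refinement supplies the implication: if $O \in G$ meets $W_k$, then $O \subseteq W_{k+1}$. Chaining three times: $x^{G,F} \in W_1 \cap \cl{O_X}$ gives $O_X \subseteq W_2$, hence $x^G_i \in W_2$; any $O^i \in G$ containing $x^G_i$ then lies in $W_3$; and the edge $O^i \cap O^{i+1} \neq \emptyset$ forces $O^{i+1} \subseteq U$, so $x^G_{i+1} \in U$ as well. The analogous control for $y'$ needs only one layer, since $y'$ sits directly in $O_Y$: take $V_1$ with $y^* \in V_1 \subseteq \cl{V_1} \subseteq V_f$, form the cover $\{V_f,\ Y \setminus \cl{V_1}\}$, and refine it to a finite open convex cover $\calF_1 \in \calF_Y^C$, which is possible by local convexity together with the compactness of $Y$.

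To assemble: by directedness of $\calF_X \times \calF_Y^C$ there is $(G_*, F_*)$ refining $\calG_1, \calG_2, \calG_3$ and $\calF_1$; by cofinality of the subnet $\calH$, eventually $(G, F) \in \calH$ satisfies $(G, F) \geq (G_*, F_*)$; and by the Vietoris convergence $D^{G,F} \to D^*$ applied to the open set $W_1 \times V_1 \ni (x^*, y^*)$, eventually $D^{G,F} \cap (W_1 \times V_1) \neq \emptyset$. Choosing such a $(G, F)$ and a point $(x^{G,F}, y^{G,F})$ in this intersection, extracting the associated $\cl{O_X} \times \cl{O_Y}$ and the witness $(i, t, y')$, and applying the refinements above yields the required contradiction and completes the proof that $D^* \subseteq C_f$.
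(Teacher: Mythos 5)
Your proposal is correct and follows essentially the same route as the paper: argue by contradiction, separate $y^*$ from $f(x^*,y^*)$ by disjoint convex open sets, trace a point of $D^{G,F}$ near $(x^*,y^*)$ back to the fixed-point identity $y' = (1-t)f(x^G_i,y') + t f(x^G_{i+1},y')$, and use convexity of the separating neighborhood of $f(x^*,y^*)$ to force the contradiction. The only difference is bookkeeping: where you control $x^G_i$, $x^G_{i+1}$, and $y'$ by nested neighborhood chains and by requiring $G$ and $F$ to refine explicit two-element covers, the paper introduces the relation of $(k,F)$-neighbors and isolates the needed uniformity in an auxiliary statement (Lemma~\ref{lemma:important}); the two devices are interchangeable.
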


To prove Lemma~\ref{lemma:subset} we need one further definition and an auxiliary result.

\begin{definition}
Let $F$ be a finite open cover of the topological space $Z$,
and let $z,z' \in Z$.
We say that $z$ and $z'$ are $(0,F)$-neighbors if these two points lie in the same element of $F$.
For every $k \geq 0$, we say that $z$ and $z'$ are $(k+1,F)$-neighbors if
there is $z'' \in Z$ such that $z$ and $z''$ are $(0,F)$-neighbors and $z''$ and $z'$ are $(k,F)$-neighbors.
\end{definition}

Note that if $z$ and $z'$ are $(k,F)$-neighbors, then in particular they are $(k+1,F)$-neighbors.
Note also that if $z \in O \in F$ and $z' \in \cl{O}$ then $z$ and $z'$ are 1-neighbors.

Since $f$ is continuous and $X$ and $Y$ are compact and Hausdorff, we have the following result.

\begin{lemma}
\label{lemma:important}
Let $x^* \in X$, let $y^* \in Y$,
let $O^1_Y$ be an open convex set in $Y$ that contains $f(x^*,y^*)$,
and let $O^2_Y$ be an open convex set in $Y$ that contains $y^*$.
There are a finite open cover $\widehat G$ of $X$ and a fiinte open convex cover $\widehat F$ of $Y$
such that the following properties hold:
\begin{itemize}
%\item[(D.0)] There is a unique element in $\widehat F$ that contains $x^*$,
%denoted $\widehat O_X$. Moreover,
%$\widehat O_X \subseteq O_X$ or $\widehat O_X \cap O_X = \emptyset$.
\item[(D.1)]
$f(x,y) \in O^1_Y$
for every $x \in X$ and every $y \in Y$ that satisfy that
$x$ and $x^*$ are $(4,\widehat G)$-neighbors
and $y$ and $y^*$ are $(2,\widehat F)$-neighbors.
\item[(D.2)]  $y \in O^2_Y$ for every $y \in Y$ that is a $(2,\widehat F)$-neighbor of $y^*$.
\end{itemize}
\end{lemma}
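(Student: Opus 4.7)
The plan is to use joint continuity of $f$ at $(x^*, y^*)$ to reduce both conditions to a single problem of cover fineness, and then to build $\widehat G$ and $\widehat F$ by the same nested-shrinking construction.

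First, by joint continuity pick open $U_X \ni x^*$ in $X$ and open $V \ni y^*$ in $Y$ with $f(U_X \times V) \subseteq O^1_Y$. Using a continuous seminorm in the ambient locally convex space, shrink inside $V \cap O^2_Y$ to a convex open neighborhood $U_Y$ of $y^*$ in $Y$. It then suffices to produce $\widehat G \in \calF_X$ such that every $(4,\widehat G)$-neighbor of $x^*$ lies in $U_X$, and $\widehat F \in \calF_Y^C$ such that every $(2,\widehat F)$-neighbor of $y^*$ lies in $U_Y$. Since $U_X \times U_Y$ maps into $O^1_Y$ and $U_Y \subseteq O^2_Y$, both (D.1) and (D.2) follow at once.

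Both needs are instances of a uniform sub-lemma: given a compact Hausdorff $Z$, a point $z^* \in Z$, an open neighborhood $U$ of $z^*$, and $k \in \dN$, there is a finite open cover $F$ of $Z$ all of whose $(k,F)$-neighbors of $z^*$ lie in $U$, with every element convex if $Z$ sits in a locally convex space and $U$ contains a convex open neighborhood of $z^*$. To prove it I would choose nested sets
\[ \{z^*\} \subseteq U_0 \subseteq \cl{U_0} \subseteq U_1 \subseteq \cl{U_1} \subseteq \cdots \subseteq \cl{U_{k-1}} \subseteq U_k \subseteq U, \]
using normality of compact Hausdorff in general and level sets of a continuous seminorm at $z^*$ in the convex variant. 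Then for each $z \in Z$ I would pick an open (convex, if needed) neighborhood $W_z$ tailored to the depth of $z$: set $W_z := U_0$ when $z \in U_0$; choose $W_z \subseteq U_i \setminus \cl{U_{i-2}}$ when $z \in U_i \setminus U_{i-1}$ with $1 \le i \le k$ (convention $U_{-1} := \emptyset$); and $W_z \subseteq Z \setminus \cl{U_{k-1}}$ when $z \notin U_k$. Compactness of $Z$ extracts a finite subcover $F$.

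A case check on the $W_z$ yields two properties of $F$: every $W \in F$ containing $z^*$ is contained in $U_0$, and every $W \in F$ that meets $U_{i-1}$ is contained in $U_i$ for $1 \le i \le k$ (in the second case, $W \cap \cl{U_{i-2}} = \emptyset$ forces the $j$-index of its ``layer'' $U_j$ to satisfy $j \le i$). Induction on $j$ then shows that every distance-$j$ element in the cover adjacency graph, starting from a $z^*$-containing element, lies in $U_j$, so every $(k,F)$-neighbor of $z^*$ lies in $U_k \subseteq U$. Applying the sub-lemma with $(X, x^*, U_X, 4)$ produces $\widehat G$, and with $(Y, y^*, U_Y, 2)$ in the convex variant produces $\widehat F$. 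The main obstacle I foresee is the depth bookkeeping: the two-level buffer $U_i \setminus \cl{U_{i-2}}$, rather than $U_i \setminus \cl{U_{i-1}}$, is essential — without it a single cover element could jump two layers toward $z^*$ and destroy the inductive step.
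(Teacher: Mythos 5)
Your proposal is correct and takes essentially the same route as the paper: joint continuity of $f$ at $(x^*,y^*)$ gives neighborhoods $U_X\times V$ mapped into $O^1_Y$, reducing the lemma to producing covers fine enough that bounded-depth neighbors of $x^*$ and $y^*$ stay inside prescribed neighborhoods --- the paper merely asserts that such covers exist by compactness, whereas you actually construct them via the nested layers $U_0\subseteq\cl{U_0}\subseteq U_1\subseteq\cdots$. One indexing slip: as written, a cover element $W_z$ with $z\in U_1\setminus U_0$ is constrained only to lie in $U_1$ (since $U_{-1}=\emptyset$ imposes nothing), so it may contain $z^*$ without being contained in $U_0$, and your first case-check claim fails for that layer; this is harmless --- insert one extra nested set so the induction places depth-$j$ elements inside $U_{j+1}\subseteq U$.
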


We first prove Lemma~\ref{lemma:subset} using Lemma~\ref{lemma:important}.
We will then prove Lemma~\ref{lemma:important}.

\bigskip

\begin{proof}[Proof of Lemma~\ref{lemma:subset}]
Fix $(x^*,y^*) \in D^*$.
Our goal is to show that $(x^*,y^*) \in C_f$,
namely, $f(x^*,y^*) = y^*$.
Since $Y$ is Hausdorff, it is sufficient to prove that $f(x^*,y^*) \in O_Y$
for every open set $O_Y$ in $Y$ that contains $y^*$.
Since $Y$ is locally convex,
it is sufficient to prove this property whenever $O_Y$ is convex.

Fix then an open convex set $O^1_Y$ in $Y$ that contains $y^*$.
Assume by contradiction that $f(x^*,y^*) \not\in O^1_Y$.
Then we can assume w.l.o.g.~that there is an open neighborhood $O^2_Y$ of $f(x^*,y^*)$ that is disjoint of $O^1_Y$.
Indeed, since $Y$ is Hausdorff, 
there are two open sets $\widehat O^1_Y$ and $\widehat O^2_Y$ in $Y$ that contain $y^*$ and $f(x^*,y^*)$, respectively. Consider then $\widehat O^1_Y \cap O^1_Y$ and $\widehat O^2_Y$.

%Let $F_0$ be a finite open cover of $X$ that satisfies the following property:
%There is a unique element of $F_0$ that contains $x^*$,
%and this element is a subset of $O_X$.
%/// what is needed from $F_0$ ??? ///
%
Let $\widehat G$ be a finite open cover of $X$
and let $\widehat F$ be a finite open convex cover of $Y$ that satisfy (D.1)--(D.2).
Let $\widehat O_X$ be an element of $\widehat G$ that contains $x^*$,
and let $\widehat O_Y$ be an element of $\widehat F$ that contains $y^*$.

Let $\widehat\calF_X \subseteq \calF_X$ be the directed set of finite open covers $G$ of $\calF_X$ that satisfy $G \geq \widehat G$.

The intersection $D^* \cap (\widehat O_X\times \widehat O_Y)$ is not empty, since it contains $(x^*,y^*)$.
It follows that there exists a directed subset $\widehat \calF^{*}_X$ of $\widehat \calF_X$ such that
$D^{G,\widehat F} \cap (\widehat O_X \times \widehat O_Y) \neq \emptyset$ for every $G \in \widehat \calF^{*}_X$.

For every $G \in \widehat \calF^{*}_X$ fix $(x^G,y^G) \in D^{G,\widehat F} \cap (\widehat O_X\times \widehat O_Y)$.
In particular, $x^G$ and $x^*$ are 0-neighbors, and $y^G$ and $y^*$ are 0-neighbors.
By the definition of $D^{G,\widehat F}$,
there are $x'^G \in O'_X \in G$ and $y'^G \in O'_Y \in \widehat F$ such that
$(x^G,y^G) \in \cl{O'_X} \times \cl{O'_Y} \subseteq D^{G,\widehat F}$.
In particular, $x'^G$ and $x^G$ are $(1,G)$-neighbors, and
$y'_G$ and $y^G$ are $(1,\widehat F)$-neighbors,
and therefore $x'^G$ and $x^*$ are 2-neighbors, and $y'^G$ and $y^*$ are $(2,\widehat F)$-neighbors.

By the definition of $D^{G,\widehat F}$, there are $x^G_1, x^G_2 \in X$ and $t^G \in [0,1]$ such that
\begin{equation}
\label{equ:1}
y'^G = (1-t^G) f(x^G_1,y'^G) + t^G f(x^G_2,y'^G),
\end{equation}
where
(a) w.l.o.g., $x^G_1$ and $x'^G$ are $(0,G)$-neighbors,
and
(b) $x^G_1$ and $x^G_2$ are $(1,G)$-neighbors.
Since $G \geq \widehat G$, the points $x^G_1$ and $x'^G$ are $(0,\widehat G)$-neighbors
and $x^G_2$ and $x'^G$ are $(1,\widehat G)$-neighbors.

It follows that $x^G_1$ and $x^*$ are $(3,\widehat G)$-neighbors
and $x^G_2$ and $x^*$ are $(4,\widehat G)$-neighbors.
By (D.1),
$f(x^G_1,y'^G)$ and $f(x^G_2,y'^G)$ lie in $O^1_Y$.
Since $O^1_Y$ is convex,%
\footnote{This is the only argument in the proof that requires that $Y$ is locally convex.} 
Eq.~\eqref{equ:1} implies that $y'^G \in O^1_Y$.

On the other hand,
Since $y^*$ and $y'^G$ are $(2,\widehat F)$-neighbors,
it follows that $y'^G \in O^2_Y$,
which contradicts the assumption that $O^1_Y$ and $O^2_Y$ are disjoint.
\end{proof}

\bigskip

We complete the proof of Theorem~\ref{theorem:general} by proving Lemma~\ref{lemma:important}.

\bigskip

\begin{proof}[Proof of Lemma~\ref{lemma:important}]
Since $X$ and $Y$ are compact and $f$ is continuous,
and since $f(x^*,y^*)$ lies in the open set $O_Y$,
there are open sets $O^1_X$ in $X$ and $O^1_Y$ in $Y$ such that
\[ f(x,y) \in O_X, \ \ \ \forall x \in O^1_X, \forall y \in O^1_Y. \]
Since $Y$ is locally convex, compact, and Hausdorff,
there is a finite open cover $F \in \calF_Y^C$
such that for every $y \in Y$ that is a $(2,F)$-neighbor of $y^*$ and each
$O'_Y \in F$ that contains $y$ we have $O'_Y \subseteq O^1_Y$.
An analogous statement for $X$ establishes the lemma.
\end{proof}

\begin{comment}
When the spaces $X$ and $Y$ are metric, 
the proof can be simplified: 
in this case the Vietoris topology is metrizable by the Hausdorff distance, 
and therefore there is no need to use nets:
Instead of working with the directed sets of all finite open covers of $X$ and finite open convex covers of $Y$,
one can take for every $k \in \dN$ a finite open  cover $G_k$ of $X$ and a finite open convex cover $F_k$ of $Y$ such that the diameters of all sets in $G_k$ and $F_k$ are smaller than $1/k$,
and define $D^*$ to be a limit of the sets $D^{G_k,F_k}$ as $k$ goes to $\infty$.
\end{comment}

\end{document}